\newcommand{\B}{{\mathbb B}}
\newcommand{\D}{{\mathbb D}}
\newcommand{\A}{{\mathbb A}}
\newcommand{\R}{{\mathbb R}}
\newcommand{\N}{{\mathbb N}}
\newcommand{\Rnn}{{\mathbb R}_{\ge 0}}
\newcommand{\C}{{\mathbb C}}
\newcommand{\cA}{{\mathcal A}}
\newcommand{\cB}{{\mathcal B}}
\newcommand{\cD}{{\mathcal D}}
\newcommand{\cK}{{\mathcal K}}
\newcommand{\cM}{{\mathcal M}}
\newcommand{\cO}{{\mathcal O}}
\newcommand{\cP}{{\mathcal P}}
\newcommand{\diag}{{\mathrm{diag}}}
\newcommand{\cl}{\mathrm{cl}}
\newcommand{\inter}{\mathrm{int}}
\def\QED{\mbox{\rule[0pt]{1.3ex}{1.3ex}}} % for a filled box
\newenvironment{proof}{{\quad \it Proof:\,}}{\hfill \QED \par}
\newenvironment{proof-of}[1]{{\it Proof of #1:\,}}{\hfill\QED\par}
\newtheorem{thm}{Theorem}
\newtheorem{defn}{Definition}
\newtheorem{prop}{Proposition}
\title{Properties of Isostables and Basins of Attraction of Monotone Systems}
\author{Aivar Sootla and Alexandre Mauroy
\thanks{Aivar Sootla is with Montefiore Institute, University of Li\`{e}ge, B-4000, Belgium {\tt asootla@ulg.ac.be}. 
Alexandre Mauroy is with the Luxembourg Centre for Systems Biomedicine, University of Luxembourg, L-4367 Belvaux, Luxembourg {\tt {alexandre.mauroy@uni.lu}}}
\thanks{A. Sootla holds an F.R.S--FNRS fellowship. Most of this work was performed when A. Mauroy was with the University of Li\`{e}ge and held a return grant from the Belgian Science Policy (BELSPO).}}
\begin{document}
\maketitle
\thispagestyle{empty}
\pagestyle{empty}
\begin{abstract}
In this paper, we investigate geometric properties of monotone systems by studying their isostables and basins of attraction. Isostables are boundaries of specific forward-invariant sets defined by the so-called Koopman operator, which provides a linear infinite-dimensional description of a nonlinear system. First, we study the spectral properties of the Koopman operator and the associated semigroup in the context of monotone systems. Our results generalize the celebrated Perron-Frobenius theorem to the nonlinear case and allow us to derive geometric properties of isostables and basins of attraction. Additionally, we show that under certain conditions we can characterize the bounds on the basins of attraction under parametric uncertainty in the vector field. We discuss computational approaches to estimate isostables and basins of attraction and illustrate the results on two and four state monotone systems.
\end{abstract}
%\begin{keywords}
%basins of attraction, monotone systems, Koopman operator, isostables, nonlinear Perron-Frobenius theorem
%\end{keywords}
\section{Introduction}
In many applications, such as economics~\cite{leontief1986input} or biology~\cite{hirsch2005monotone}, linear dynamical systems have states, which take only nonnegative values. For example, protein concentrations in biology are always nonnegative. These systems are called \emph{positive} and have received a considerable attention in the context of systems theory~\cite{coxson1987positive},~\cite{PosSysBook}, model reduction~\cite{Sootla2012positive, grussler2012symmetry}, distributed control~\cite{rantzer2015ejc},~\cite{tanaka2011bounded}, etc. Analysis of such systems is facilitated by employing the celebrated Perron-Frobenius theorem (cf.~\cite{berman1994nonnegative}), which establishes strong spectral properties of the drift matrix in a linear positive system. 

In the nonlinear setting positive systems have a couple of generalizations, with the most known being \emph{cooperative monotone systems} (cf.~\cite{hirsch2005monotone}). More specifically positive systems generalize to cooperative with respect to the positive orthant systems. Similarly to the linear case, these nonlinear systems generate trajectories (or flows) which for every time $t$ are increasing functions in every argument with respect to the initial state. With a slight abuse of notation, we will refer to cooperative monotone systems simply as \emph{monotone}. In~\cite{hirsch2005monotone}, it was briefly mentioned that the flow of a monotone system can be seen as a positive operator. Hence the authors argued that an operator extension of the Perron-Frobenius theorem, which is called the Krein-Rutman theorem~\cite{krein1950positive}, can be applied. However, the investigation into spectral properties of these operators lacked due to absence of a well-developed theory of spectral elements of such operators. This gap was filled by the development of the so-called Koopman operator and the associated semigroup (cf.~\cite{mezic2005, budivsic2012applied, mezic2013analysis}). 

The Koopman semigroup is a linear infinite-dimensional representation of a nonlinear dynamical system. In many practical situations, the spectral elements of this semigroup such as eigenfunctions (which can be seen as infinite dimensional eigenvectors) and eigenvalues can be computed~\cite{mauroy2013isostables,mauroy2014global}. As in the linear case, the dominant eigenfunctions (i.e., eigenfunctions corresponding to the eigenvalue with the largest real part) offer an insight into the dynamics of the system. In~\cite{mauroy2013isostables}, it is shown that the level sets of the dominant eigenfunction (called \emph{isostables}) contain the initial states of trajectories that converge synchronously toward the equilibrium.  The level set at infinity is the boundary of the basin of attraction of the equilibrium. To summarize, the dominant eigenfunctions and eigenvalues contain the information about the asymptotic behavior of the system.

In this paper, we first study the properties of the Koopman semigroup associated with a monotone system. We show that for stable systems monotone with respect to the nonnegative orthant the dominant eigenfunction is an increasing function in every argument for all states in the basin of attraction. Hence we offer yet another version of the Perron-Frobenius theorem for nonlinear systems (see~\cite{lemmens2012nonlinear} for other results on the subject). We use our version of the Perron-Frobenius theorem to study geometric concepts of isostables and basins of attraction for monotone systems. Using this statement we provide a straightforward proof of a known result stating that a basin attraction of a monotone system is a union of the so-called \emph{order-intervals} (cf.~\cite{smith2001stable, jiang2004saddle,ruffer2010computing}). We refine this result by showing that the sublevel sets of the dominant eigenfunctions are unions of order-intervals, as well.

We proceed our theoretical development by considering the basins of attraction of bistable monotone systems under parametric uncertainty in the vector field.  We show that if the system is monotone with respect to parameter variations, then it is possible to estimate inner and outer bounds on the basin of attraction of the uncertain system. In this instance, instead of the spectral theory we use monotone control systems theory. Moreover, it appears that the behavior of the eigenfunctions and isostables under parametric uncertainty is more complicated in comparison with the behavior of basins of attraction.

The rest of the paper is organized as follows. In Section~\ref{s:prel}, we cover the main properties of the Koopman operator and monotone systems. In Subsection~\ref{s:spectral} we obtain spectral properties of monotone systems, while deriving some properties of the isostables in Subsection~\ref{ss:iso}. We investigate the behavior of basins of attraction of monotone systems under parameter variations in Subsection~\ref{s:basin}. We also discuss different algorithms to compute isostables and the boundary of the basins of attraction in Section~\ref{s:alg}. We conclude the paper with numerical examples in Section~\ref{s:examples}. 

\section{Preliminaries\label{s:prel}}
Throughout the paper we consider parameter-dependent systems in the following form
\begin{equation}
\label{sys:f}
\dot x = f(x,p),\quad x(0) = x_0,
\end{equation} 
where $f: \cD\times \cP\rightarrow \R^n$, where $\cD\subset\R^n$ and $\cP\subset\R^m$ for some integers $n$ and $m$. We define the flow map $\phi_f: \R \times \cD \times \cP\rightarrow \R^n$, where $\phi_f(t, x_0, p)$ is a solution to the system~\eqref{sys:f} with an initial condition $x_0$ and a parameter $p$. We assume that $f(x,p)$ is continuous in $(x,p)$ on $\cD\times \cP$ and Lipschitz in $x$ on a every compact subset of $\cD$ for every fixed $p$. When it is clear from the context, we will drop the parameter-dependence.

\subsection{Monotonicity}
We will study the properties of the system \eqref{sys:f} with respect to a partial order induced by cones. A set $\cK$ is called \emph{a positive cone} if it is closed under addition and multiplication by a nonnegative scalar and if we have $-x\not\in \cK$ for any $x\in \cK$. A relation $\sim$ is called a {\it partial order} if it is reflexive ($x\sim x$), transitive ($x\sim y$, $y\sim z$ implies $x\sim z$), and antisymmetric ($x\sim y$, $y\sim x$ implies $x = y$). We define a partial order $\succeq_\cK$ through a cone $\cK\subset\R^n$ as follows: $x\succeq_\cK y$ if and only if $ x - y \in \cK$. We will also write $x\succ_\cK y$ if $x\succeq_\cK y$ and $x\ne y$, and $x\gg_\cK y$ if $x- y \in \inter(\cK)$. We say that the order is standard if $\cK = \Rnn^n$, which, with a slight abuse of notation, we denote as $\succeq$ without a subscript. We call a set $\{ z | x\preceq_\cK z \preceq_\cK y \}$ \emph{an order-interval} induced by a cone $\cK$ and denote it as $[x,~y]_\cK$. A set $\cA$ is called \emph{p-convex} if for every $x$, $y$ in $\cA$ such that $x\succeq_\cK y$, and every $\lambda\in(0,1)$ we have that $\lambda x+ (1-\lambda) y\in \cA$. 
Systems whose flows preserve a partial order relation $\succeq_\cK$ are called \emph{monotone systems}. 

\begin{defn}\label{def:mon}
The system is \emph{monotone} with respect to the cones ${\cK_x}$, ${\cK_p}$ if $\phi_f(t, x, p)\preceq_{\cK_x} \phi_f(t, y, q)$ for all $t\ge 0$, and for all $x\preceq_{\cK_x} y$ and $p \preceq_{\cK_p} q$. The system is \emph{strongly monotone} with respect to the cones ${\cK_x}$, ${\cK_p}$ if it is monotone and $\phi_f(t, x, p) \ll_{\cK_x} \phi_f(t, y, q)$ holds for all $t>0$ provided $x\preceq_{\cK_x} y$, $p \preceq_{\cK_p} q$, and either $x\prec_{\cK_x} y$ or $p \prec_{\cK_p} q$ holds.
\end{defn}

A certificate for monotonicity with respect to an orthant is called \emph{Kamke-M\"uller} conditions~\cite{angeli2003monotone}, where some generalizations of this result may also be found.
\begin{prop}[\cite{angeli2003monotone}]\label{prop:kamke}
Consider the system~\eqref{sys:f}, where $f$ is differentiable in $x$ and $p$ and let the sets $\cD$, $\cP$ be p-convex. Then the system~\eqref{sys:f} is monotone on $\cD\times \cP$ with respect to the standard partial orders if and only if 
\begin{gather*}
\frac{\partial f_i}{\partial x_j}\ge 0,\quad\forall~i\ne j,\quad(x,p)\in\cl(\cD)\times\cP\\
\frac{\partial f_i}{\partial p_j}\ge 0,\quad\forall~i, j,\quad(x,p)\in\cD\times\cP.
\end{gather*}
\end{prop}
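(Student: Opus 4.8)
The plan is to establish the two implications separately. Necessity follows by \emph{differentiating the order-preservation property at $t=0$}, and sufficiency by the classical \emph{Kamke--M\"uller comparison argument}, after first recasting the Jacobian sign conditions as a quasimonotonicity (``type~K'') property of the vector field.

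For necessity I would fix $(x,p)\in\cD\times\cP$, a pair of indices $i\neq j$, and a small $\varepsilon>0$ with $x+\varepsilon e_j\in\cD$ (where $e_j$ is the $j$th unit vector; the boundary case is absorbed later by continuity), and study $g(t)=\phi_{f,i}(t,x+\varepsilon e_j,p)-\phi_{f,i}(t,x,p)$, the subscript $i$ denoting the $i$th component. Since $x\preceq x+\varepsilon e_j$ with the parameter unchanged, Definition~\ref{def:mon} gives $g(t)\ge 0$ for $t\ge 0$, while $g(0)=0$ because the two initial states agree in every coordinate other than $j$, in particular in coordinate $i$. A nonnegative differentiable function vanishing at $0$ has nonnegative right derivative there, so $0\le\dot g(0)=f_i(x+\varepsilon e_j,p)-f_i(x,p)$; dividing by $\varepsilon$ and letting $\varepsilon\to 0$ yields $\partial f_i/\partial x_j\ge 0$ on $\cD\times\cP$, hence on $\cl(\cD)\times\cP$ by continuity of the derivative. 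Repeating the argument with the initial state held fixed and the parameter perturbed, $q=p+\varepsilon e_l$, gives $\partial f_i/\partial p_l\ge 0$ for \emph{all} $i,l$ --- the diagonal restriction disappears precisely because the two flows now start from the same point, so $g(0)=0$ for every component $i$.

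For sufficiency the first step is to upgrade the Jacobian conditions, using p-convexity, to the statement that whenever $x\preceq y$, $p\preceq q$ and $x_i=y_i$ for some $i$, one has $f_i(x,p)\le f_i(y,q)$. This comes from integrating $\nabla f_i$ along the segment joining $(x,p)$ to $(y,q)$, which stays in $\cD\times\cP$ by p-convexity of $\cD$ and of $\cP$:
\[
f_i(y,q)-f_i(x,p)=\int_0^1\Big(\sum_{k\neq i}\frac{\partial f_i}{\partial x_k}(y_k-x_k)+\sum_{l}\frac{\partial f_i}{\partial p_l}(q_l-p_l)\Big)\,ds\ \ge\ 0,
\]
the $k=i$ term vanishing since $y_i=x_i$ and every remaining term being a product of a nonnegative partial derivative with a nonnegative increment. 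With this in hand I would run the M\"uller perturbation argument: given $x\preceq y$, $p\preceq q$, set $u(t)=\phi_f(t,x,p)$ and let $w_\varepsilon$ solve $\dot w=f(w,q)+\varepsilon\bfone$ with $w_\varepsilon(0)=y+\varepsilon\bfone$. One shows $u(t)\ll w_\varepsilon(t)$ on a common existence interval by a first-failure-time argument: at the first $t_1$ where it would fail, some component satisfies $u_i(t_1)=w_{\varepsilon,i}(t_1)$ while $u(t_1)\preceq w_\varepsilon(t_1)$, and then quasimonotonicity gives $\dot u_i(t_1)=f_i(u(t_1),p)\le f_i(w_\varepsilon(t_1),q)<\dot w_{\varepsilon,i}(t_1)$, forcing $w_{\varepsilon,i}-u_i$ to be strictly increasing at $t_1$ although it vanishes there --- a contradiction. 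Letting $\varepsilon\to 0$ and using continuous dependence on the data, $w_\varepsilon(t)\to\phi_f(t,y,q)$, and the inequality $u(t)\preceq w_\varepsilon(t)$ passes to the limit, which is the desired conclusion.

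The step I expect to be the main obstacle is this last comparison argument: one has to be careful that the perturbed solutions $w_\varepsilon$ remain inside $\cD\times\cP$ on a time interval independent of $\varepsilon$ (delicate if trajectories approach $\partial\cD$) and that the ``first failure time'' is genuinely attained --- which is exactly why the strictly positive perturbation $\varepsilon\bfone$ is built in, so that the relation to be propagated is the open condition $\ll$. The remaining ingredients --- the $t=0$ linearisation for necessity and the integral reformulation of the sign conditions --- are routine once p-convexity is invoked to keep the connecting segments inside the domain.
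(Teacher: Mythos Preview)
The paper does not prove this proposition at all: it is quoted as a known characterisation and attributed to \cite{angeli2003monotone}, so there is no ``paper's own proof'' to compare against. Your sketch is the standard argument one finds in that reference (and in Smith's monograph): linearise the order-preservation at $t=0$ for necessity, and for sufficiency recast the sign conditions as the type-K/quasimonotone inequality and run the M\"uller $\varepsilon$-perturbation comparison. The outline is correct, including your remark that the diagonal restriction $i\neq j$ disappears for the parameter derivatives because the two flows then share the same initial state. The caveats you flag --- keeping the perturbed trajectory inside $\cD$ on a uniform interval and ensuring the first-contact time is attained via the strict perturbation --- are exactly the points where care is needed, and they are handled in the cited source in the way you indicate.
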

\subsection{Koopman Operator} 
Spectral properties of nonlinear dynamical systems can be described through an operator-theoretic framework that relies on the so-called Koopman operator. The Koopman operator associated with $\dot x = f(x)$ is an operator acting on the functions $g:\R^n\rightarrow \C$ (also called observables). The Koopman operator generates the semigroup
\begin{gather}
U^t g(x) = g \circ \phi_f(t,x),
\end{gather}
where $\circ$ is the composition of functions and $\phi_f(t,x)$ is a solution to the considered system. The Koopman semigroup is linear~\cite{mezic2005}, hence it is natural to study its spectral properties. In particular, the eigenfunctions $s_j(x)$ of the Koopman semigroup are defined as the functions $\R^n\rightarrow \C$ satisfying
\begin{gather}
\label{eq:property_eigenf}
U^t s_j(x) = s_j(\phi_f(t, x)) = s_j(x) \, e^{\lambda_j t},
\end{gather}
and $\lambda_j\in \mathbb{C}$ is the associated eigenvalue. We can also obtain a very useful expression
\begin{gather}
f(x)^T \nabla s_j(x) = \lambda_j s_j(x). \label{eq:s-one}
\end{gather}
If $f$ is analytic, and if the eigenvalues $\lambda_j$ of the Jacobian of the vector field at the equilibrium $x^\ast$ are distinct and $\Re(\lambda_j)<0$ for all $j$, then the flow of the system can be expressed (at least locally) through the following expansion:
\begin{align}
\label{eq:expansion}&\phi_f(t, x) = x^\ast + \sum\limits_{j=1}^{n} s_j(x) v_j e^{\lambda_j t} + \\
&\sum\limits_{\begin{smallmatrix}
k_1,\dots,k_n\in\N_0\\
k_1+\dots+k_n>1 
\end{smallmatrix}} v_{k_1,\dots,k_n} \, s_1^{k_1}(x) \cdots s_n^{k_n}(x) e^{(k_1\lambda_1 +\dots k_n\lambda_l) t}, \nonumber
\end{align}
where $\N_0$ is the space of nonnegative integers, $v_j$ are the right eigenvectors corresponding to $\lambda_j$, the vectors $v_{k_1,\dots,k_n} \,$ are the so-called Koopman modes (cf.~\cite{mezic2005, mauroy2014global} for more details). In the case of a linear system $\dot x = A x$ with matrix $A$ having the left eigenvectors $w_i$, the eigenfunctions $s_i(x)$ are equal to $w_i^T x$ and the expansion~\eqref{eq:expansion} has only the finite sum. A similar (but lengthy) expansion can be obtained even if the eigenvalues $\lambda_j$ are not distinct and have linearly dependent eigenvectors (cf.~\cite{mezic2015applications}).

Let $\lambda_j$ be such that $0>\Re(\lambda_1)\ge \Re(\lambda_j)$, $j\neq 1$, then the eigenfunction $s_1$, which we call \emph{dominant}, can be computed using the so-called Laplace average (cf.~\cite{mauroy2013isostables})
\begin{gather}\label{laplace-average}
g_{\lambda_1}^\ast(x) = \lim\limits_{t\rightarrow \infty}\frac{1}{t}\int\limits_0^t (g\circ \phi_f(s, x)) e^{-\lambda_1 s} d s.
\end{gather}
For all $g$ that satisfy $g(x^\ast)=0$ and $v_1^T \nabla g(x^\ast) \neq 0$, the Laplace average $g_{\lambda_1}^\ast$ is equal to $s_1(x)$ up to a multiplication with a scalar. Therefore, without loss of generality, we will only consider so-called increasing functions as observables. A function $g(x):\R^n \rightarrow \R$ is called \emph{increasing} with respect to $\cK$ if for all $x$, $z$ such that $x\preceq_\cK z$ we have $g(x) \le g(z)$. The eigenfunctions $s_j(x)$ with $j\ge 2$ (non-dominant eigenfunctions) are generally harder to compute and are not considered in the present study. 

The eigenfunction $s_1(x)$ captures the dominant (i.e. asymptotic) behavior of the system. Moreover for $\lambda_1 \in \R$, it follows from~\eqref{eq:expansion} that the trajectories starting from the boundary $\partial \cB_\alpha$ of the set $\cB_\alpha =  \{x | |s_1(x)| \le \alpha \}$ share the same asymptotic evolution
\begin{equation*}
\phi_f(t,x) \rightarrow x^\ast + v_1 \, \alpha  e^{\lambda_1 t}\,, \quad t\rightarrow \infty\,.
\end{equation*}

Naturally, these sets are important for understanding the dynamics of the system. The sets $\partial\cB_\alpha=\{x| |s_1(x)|=\alpha\}$ are called \emph{isostables}, and contain the initial conditions of trajectories that converge synchronously toward the equilibrium. In the neighborhood of $x^\ast$, the isostables are parallel hyperplanes (if $\lambda \in \R$).  We will also employ the notations 
\begin{gather*}
\partial_+ \cB_\alpha=\left\{x\Bigl| s_1(x) = \alpha\right\},\quad
\partial_- \cB_\alpha=\left\{x\Bigl| s_1(x) = -\alpha\right\}
\end{gather*} 
for real and nonnegative $\alpha$. A more rigorous definition of isostables and more information can be found in~\cite{mauroy2013isostables}.

\section{Geometric Properties of Monotone Systems \label{s:geo-basins}}
\subsection{Spectral Properties of Monotone Systems \label{s:spectral}}
We start by establishing the properties of the dominant eigenfunctions and eigenvalues of monotone systems.
\begin{prop} \label{prop:mon-eig-fun}
Assume that the system $\dot x = f(x)$ admits a stable equilibrium $x^\ast$ with a basin of attraction $\cB$. Let $\lambda_i$ be the eigenvalues of the Jacobian at $x^\ast$ such that $0>\Re(\lambda_i) \ge \Re(\lambda_j)$ for all $i \le j$, and let $v_j$ be  corresponding right eigenvectors. Let $s_1(\cdot)\in C^1(\cB)$ be the dominant eigenfunction with $v_1^T \nabla s_1(x^\ast) = 1$. Then:

(i) if the system is monotone with respect to $\cK$ on $\mathrm{int}(\cB)$, then $\lambda_1$ is real and negative, $s_1(x) \ge s_1(y)$ for all $x$, $y$ satisfying $x\succeq_\cK y$, and $v_1\succ_\cK 0 $. Additionally, we have that $s_1(x) > s_1(y)$ for all $x \gg_\cK y$.

(ii) if the system is strongly monotone with respect to $\cK$ on $\mathrm{int}(\cB)$, then  $\lambda_1$ is simple, real and negative, $s_1(x)> s_1(y)$ for all $x$, $y$ satisfying $x\succ_\cK y$, and $v_1\gg_\cK 0$.
\end{prop}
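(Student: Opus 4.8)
The plan is to split the statement into three kinds of conclusions and attack each with a different tool: the reality and negativity of $\lambda_1$ together with $v_1\succ_\cK 0$ via the linearisation at $x^\ast$ and the (generalised) Perron--Frobenius theorem; the non-strict monotonicity of $s_1$ via the Laplace average \eqref{laplace-average}; and the strict statements via a rigidity argument based on the eigenfunction equation \eqref{eq:s-one}. Two elementary facts will be used repeatedly: first, $s_1(x^\ast)=0$ (immediate from $s_1(\phi_f(t,x^\ast))=s_1(x^\ast)e^{\lambda_1 t}$) while $\nabla s_1(x^\ast)\neq 0$ (since $v_1^T\nabla s_1(x^\ast)=1$); second, the basin is order-convex, i.e.\ $a,b\in\cB$ with $a\preceq_\cK b$ implies $[a,b]_\cK\subseteq\cB$, which follows by squeezing $\phi_f(t,a)\preceq_\cK\phi_f(t,z)\preceq_\cK\phi_f(t,b)\to x^\ast$.

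For the spectral part of (i), observe that monotonicity of the flow on $\mathrm{int}(\cB)\ni x^\ast$ is inherited by the variational equation $\dot\delta=J\delta$, $J=Df(x^\ast)$, so $e^{Jt}\cK\subseteq\cK$ for all $t\ge 0$ (for $\cK=\Rnn^n$ this says $J$ is Metzler); passing to this limit is legitimate because $\preceq_\cK$ is a closed relation. The Perron--Frobenius / Krein--Rutman theorem for cone-preserving matrices then gives that the spectral abscissa $\max_j\Re(\lambda_j)$ is an eigenvalue of $J$ with an eigenvector in $\cK$; since $\lambda_1$ is by construction the eigenvalue of largest real part, $\lambda_1\in\R$, stability forces $\lambda_1<0$, and $v_1$ may be taken in $\cK\setminus\{0\}$, i.e.\ $v_1\succ_\cK 0$.

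The monotonicity of $s_1$ comes from choosing $w\in\mathrm{int}(\cK^\ast)$ and $g(x)=w^T(x-x^\ast)$: this $g$ is $\cK$-increasing, $g(x^\ast)=0$, and $v_1^T\nabla g(x^\ast)=w^Tv_1>0$ because $v_1\in\cK\setminus\{0\}$, so $g^\ast_{\lambda_1}=(w^Tv_1)\,s_1$. For $x\succeq_\cK y$ in $\cB$, monotonicity gives $\phi_f(s,x)\succeq_\cK\phi_f(s,y)$, hence $g(\phi_f(s,x))\ge g(\phi_f(s,y))$ for all $s\ge 0$; multiplying by $e^{-\lambda_1 s}>0$, integrating and letting $t\to\infty$ gives $s_1(x)\ge s_1(y)$. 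For the strict version in (i), suppose $x\gg_\cK y$ in $\cB$ but $s_1(x)=s_1(y)$: then $s_1$ is constant on $[y,x]_\cK\subseteq\cB$, which has nonempty interior $\cO$, so $\nabla s_1\equiv 0$ on $\cO$ and \eqref{eq:s-one} forces $s_1\equiv 0$ on $\cO$ (as $\lambda_1\neq 0$); by \eqref{eq:property_eigenf}, $s_1\equiv 0$ on the open set $\phi_f(t,\cO)$ for every $t\ge 0$, and since $\overline{\cO}$ is a compact subset of $\cB$ these images enter any prescribed neighbourhood of $x^\ast$ for large $t$, whereas near $x^\ast$ the set $\{s_1=0\}$ is a $C^1$ hypersurface (implicit function theorem, using $s_1(x^\ast)=0$, $\nabla s_1(x^\ast)\neq 0$) and so contains no nonempty open set --- a contradiction. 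This finishes (i).

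For (ii), $\lambda_1\in\R$ with $\lambda_1<0$ and $v_1\succ_\cK 0$ are inherited from (i). For strictness of $s_1$ on $\succ_\cK$-pairs, take $x\succ_\cK y$ in $\cB$ and any $\tau>0$: strong monotonicity gives $\phi_f(\tau,x)\gg_\cK\phi_f(\tau,y)$, both in $\cB$, so by the strict statement of (i) $s_1(x)e^{\lambda_1\tau}=s_1(\phi_f(\tau,x))>s_1(\phi_f(\tau,y))=s_1(y)e^{\lambda_1\tau}$, hence $s_1(x)>s_1(y)$. It remains to prove $\lambda_1$ simple and $v_1\gg_\cK 0$, and here I would return to the linearisation: strong monotonicity of the flow on $\mathrm{int}(\cB)$ has to be leveraged into the corresponding irreducibility of $J$ --- equivalently, $e^{Jt}$ maps $\cK\setminus\{0\}$ into $\mathrm{int}(\cK)$ for $t>0$ --- after which the Perron--Frobenius / Krein--Rutman theorem for strongly positive operators yields that $\lambda_1$ is a simple, strictly dominant eigenvalue with $v_1\in\mathrm{int}(\cK)$. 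I expect this to be the main obstacle: unlike the weak case, one cannot obtain strong positivity of $e^{Jt}$ by differentiating the strict inequality $\phi_f(t,x^\ast+\epsilon u)\gg_\cK x^\ast$, since $\gg_\cK$ is not a closed relation and the resulting limit only lands in $\overline{\mathrm{int}(\cK)}=\cK$; a genuinely infinitesimal (Kamke/irreducibility) argument is needed, tracking how the active off-diagonal couplings of the Jacobian along trajectories propagate strict positivity through the state (for $\cK=\Rnn^n$, strong connectivity of the dependency digraph of $J$). The rigidity step in (i) is the other point requiring care, but it goes through cleanly via the eigenfunction equation and the openness of the flow maps.
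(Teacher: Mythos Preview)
The paper does not actually prove this proposition: it only states that ``the proof of this result is similar to the proof of the main result in~\cite{sootla2015koopman}'' and refers the reader there. So there is no in-paper argument to compare against line by line; what can be said is that your proposal is considerably more detailed than anything the paper supplies.

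On the substance of your outline: the three-pronged strategy (linearisation plus Perron--Frobenius for $\lambda_1\in\R$ and $v_1\succ_\cK 0$; Laplace averages of an increasing observable for the non-strict monotonicity of $s_1$; a rigidity argument via \eqref{eq:s-one} for the strict inequalities) is sound and matches the spirit of the Koopman-based approach the paper advertises. The rigidity step in (i) is clean and correct: constancy of $s_1$ on an order-interval with interior forces $\nabla s_1\equiv 0$ there, hence $s_1\equiv 0$ there by \eqref{eq:s-one}, and propagating this open zero set by the flow into any neighbourhood of $x^\ast$ contradicts the $C^1$ hypersurface structure of $\{s_1=0\}$ near $x^\ast$. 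The bootstrap in (ii) from $x\succ_\cK y$ to $\phi_f(\tau,x)\gg_\cK\phi_f(\tau,y)$ and then back via \eqref{eq:property_eigenf} is also correct.

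You have correctly located the only genuine gap: deducing irreducibility of $J=Df(x^\ast)$ (equivalently, strong positivity of $e^{Jt}$ on $\cK$) from strong monotonicity of the \emph{flow} on $\mathrm{int}(\cB)$. As you note, one cannot simply differentiate $\phi_f(t,x^\ast+\varepsilon u)\gg_\cK x^\ast$ because $\gg_\cK$ is not closed; a Kamke/irreducibility argument at the level of the vector field is required, and for general cones this is delicate. Since the paper offloads the entire proof to \cite{sootla2015koopman}, it does not resolve this point here either; your identification of it as the main obstacle is accurate.
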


Both conditions (i) and (ii) are only necessary and not sufficient, which is consistent with the linear case. The proof of this result is similar to the proof of the main result in~\cite{sootla2015koopman} concerning spectral properties of the so-called eventually monotone systems. If the vector field is analytic, then the second condition is necessary and sufficient for so-called strong eventual monotonicity. 

We can view this result through the prism of positive operator theory. An operator $A: V\rightarrow V$ acting on a normed space $V$ is called \emph{positive} (or invariant) with respect to a cone $\cK\subseteq V$, if $A \cK \subseteq \cK$. It can be shown that the Koopman semigroup $U^t$ associated with a monotone dynamical system is a positive operator with respect to the cone of increasing functions at every time $t$. Hence the following result may be seen as a nonlinear version of the Perron-Frobenius theorem or a version of the Krein-Rutman theorem for the Koopman operator. We note that a semigroup associated with a nonlinear system was studied from the positive operator viewpoint in the context of diffusion processes in~\cite{herbst1991diffusion}. 

There were other successful attempts to extend the Perron-Frobenius theorem to nonlinear systems (cf.~\cite{gaubert2004perron, lemmens2012nonlinear, aeyels2002extension}), which however are based on applications of Perron-Frobenius arguments to nonlinear maps, while our approach is rather based on their infinite-dimensional extension. Moreover, our main contribution is that our version uses eigenfunctions, which give additional insight into the dynamics of the system and can be estimated using data-based, algebraic or simulation methods (cf.~\cite{mauroy2014global}). Nevertheless, there are similarities, and an investigation into a connection between these results is one of the future work directions.

\subsection{Isostables of Monotone Systems}
\label{ss:iso}
First, we recall a known result on the geometry of basins of attraction $\cB$ and their boundary $\partial \cB$. This result is briefly mentioned in~\cite{hirsch2005monotone,jiang2004saddle, smith2001stable} in the case of bistable monotone systems. The following version requires monotonicity on the basin of attraction only (for a complete proof see e.g.~\cite{sootla2015pulsesaut}).

\begin{prop} \label{prop:boundary}
Let the system $\dot x = f(x)$ have an asymptotically stable equilibrium $x^\ast$ with a domain of attraction $\cB$ and be monotone on $\cB$ with respect to $\cK$. Then for any $x$, $y\in\cB$ the order-interval $[x,~y]_\cK$ is a subset of $\cB$. 
\end{prop}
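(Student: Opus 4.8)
The plan is a squeezing argument built on monotonicity: the trajectory of any point in the order-interval stays trapped for all time between the trajectories of the two endpoints, and since both endpoints lie in $\cB$ these two enclosing trajectories converge to $x^\ast$, dragging the enclosed one along with them. First I would dispose of the trivial case: if $x\not\preceq_\cK y$ then $[x,~y]_\cK=\emptyset$, so assume $x\preceq_\cK y$ and fix an arbitrary $z\in[x,~y]_\cK$, \ie $x\preceq_\cK z\preceq_\cK y$. Applying the monotonicity hypothesis on $\cB$ (Definition~\ref{def:mon}, with no parameter variation) to the comparisons $x\preceq_\cK z$ and $z\preceq_\cK y$ gives, for every $t\ge 0$ at which $\phi_f(t,z)$ is defined,
\[
\phi_f(t,x)\ \preceq_\cK\ \phi_f(t,z)\ \preceq_\cK\ \phi_f(t,y),
\qquad\text{equivalently}\qquad
\phi_f(t,z)\in[\phi_f(t,x),~\phi_f(t,y)]_\cK.
\]

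Next I would invoke the normality of $\cK$ in $\R^n$ (a standard property of the cones considered here: every closed pointed cone in $\R^n$ is normal, by a compactness argument on the unit sphere): there is a constant $c>0$ such that $a\preceq_\cK w\preceq_\cK b$ forces $\|w-a\|\le c\,\|b-a\|$. Fed into the sandwich, this yields $\|\phi_f(t,z)-\phi_f(t,x)\|\le c\,\|\phi_f(t,y)-\phi_f(t,x)\|$ for all admissible $t$. Since $x,y\in\cB$, both $\phi_f(t,x)$ and $\phi_f(t,y)$ converge to $x^\ast$ as $t\to\infty$, so the right-hand side vanishes in the limit; hence $\phi_f(t,z)\to x^\ast$, which is precisely the assertion $z\in\cB$. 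The same estimate also shows that $\phi_f(\cdot,z)$ remains in a bounded set, so (as $f$ is locally Lipschitz) it cannot blow up in finite time; when $\cD=\R^n$ this already gives global existence and the proof is complete.

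The one delicate point, and the one I expect to be the main obstacle to a fully rigorous write-up, is establishing that $\phi_f(\cdot,z)$ is defined on all of $[0,\infty)$ when $\cD\subsetneq\R^n$: boundedness excludes finite-time blow-up but not the trajectory running into $\partial\cD$. This is handled by a continuation argument over successive finite intervals, using that the enclosing ``tube'' $[\phi_f(t,x),~\phi_f(t,y)]_\cK$ has endpoints in the \emph{open} set $\cB$, has diameter controlled by normality, and contracts into a prescribed forward-invariant neighbourhood of $x^\ast$ contained in $\cB$ as $t\to\infty$, so the trajectory never approaches the boundary of $\cD$. A complete treatment along these lines is given in the reference cited just after the statement.
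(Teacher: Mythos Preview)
The paper does not supply its own proof of this proposition: it only states the result and defers to the reference cited immediately after (\cite{sootla2015pulsesaut}) for a complete argument. Your squeezing argument --- sandwich $\phi_f(t,z)$ between $\phi_f(t,x)$ and $\phi_f(t,y)$ via monotonicity, then use normality of the cone to force convergence to $x^\ast$ --- is precisely the standard proof in that literature, so there is nothing to compare beyond saying that you have reproduced the intended argument.

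One point deserves to be made sharper. You invoke the monotonicity hypothesis ``on $\cB$'' for the pairs $(x,z)$ and $(z,y)$ \emph{before} you know $z\in\cB$; taken literally, this is circular, since membership of $z$ in $\cB$ is the conclusion. Your final paragraph almost closes this gap --- the contracting tube with endpoints in the open, forward-invariant set $\cB$ is the right picture --- but you phrase the obstruction as the trajectory reaching $\partial\cD$, whereas the real issue is that the comparison principle is only assumed on $\cB$, so the continuation argument must keep $\phi_f(\cdot,z)$ inside $\cB$ (not merely inside $\cD$) at every step. The usual resolution is either (i) to read ``monotone on $\cB$'' as the Kamke--M\"uller conditions holding on $\cB$ and run the continuation on the maximal interval on which $\phi_f(\cdot,z)$ remains in $\cB$, or (ii) to assume monotonicity on a p-convex domain containing $\cB$, in which case $[x,y]_\cK$ lies in that domain and the comparison applies immediately. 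Either reading is consistent with how the paper uses the hypothesis elsewhere, and both lead to the argument you have written; just make the choice explicit so the first application of Definition~\ref{def:mon} to $z$ is justified.
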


We proceed by showing similar properties of the sets $\cB_\alpha = \{x | |s_1(x)| \le \alpha \}$ and isostables $\partial \cB_\alpha = \{x | |s_1(x)| = \alpha\}$. 

\begin{prop} \label{prop:level-set-proper}
Let the system $\dot x = f(x)$ have a stable hyperbolic equilibrium $x^\ast$ with a domain of attraction $\cB$ and be monotone with respect to $\cK$ on $\cB$ with $s_1(\cdot)\in C^1(\cB)$. Then the following statements hold:

(i) for any $x$, $y\in\cB_\alpha\bigcap \cB$, the order-interval $[x,~y]_\cK$ is a subset of $\cB_\alpha\bigcap \cB$,

(ii) the manifolds $\partial_+ \cB_\alpha$, $\partial_- \cB_\alpha$ do not contain $x$, $y$ such that $x\gg_\cK y$; 

(iii) if the system is strongly monotone then the manifold $\partial \cB_\alpha$ does not contain $x$, $y$ such that $x\succ_\cK y$.
\end{prop}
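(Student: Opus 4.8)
The plan is to derive all three claims from two facts already in hand: the order-convexity of the basin (Proposition~\ref{prop:boundary}) and the monotonicity of the dominant eigenfunction (Proposition~\ref{prop:mon-eig-fun}). As a preliminary I would check that the hypotheses of Proposition~\ref{prop:mon-eig-fun} hold here: $x^\ast$ being stable and hyperbolic forces $\Re(\lambda_i)<0$ for all $i$, and monotonicity on $\cB$ in particular gives monotonicity on $\inter(\cB)$. Fixing the normalization $v_1^T\nabla s_1(x^\ast)=1$ (replacing $s_1$ by $-s_1$ only swaps $\partial_+\cB_\alpha$ with $\partial_-\cB_\alpha$ and leaves $\cB_\alpha$ unchanged), Proposition~\ref{prop:mon-eig-fun}(i) then tells us that $\lambda_1$ is real and negative and that $s_1$ is a real-valued observable, increasing with respect to $\cK$ on $\cB$, with the strict inequality $s_1(x)>s_1(y)$ whenever $x\gg_\cK y$.

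For (i), note that $[x,y]_\cK$ is nonempty only when $x\preceq_\cK y$, so take any $z$ with $x\preceq_\cK z\preceq_\cK y$. Proposition~\ref{prop:boundary} puts $z\in\cB$, so $s_1(z)$ is defined; monotonicity of $s_1$ gives $s_1(x)\le s_1(z)\le s_1(y)$; and since $x,y\in\cB_\alpha$ we have $-\alpha\le s_1(x)$ and $s_1(y)\le\alpha$. Combining, $-\alpha\le s_1(z)\le\alpha$, i.e. $z\in\cB_\alpha\cap\cB$.

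For (ii), suppose for contradiction that $x$ and $y$ both lie on $\partial_+\cB_\alpha$ (the case of $\partial_-\cB_\alpha$ is identical) and that $x\gg_\cK y$; since $s_1$ is defined only on $\cB$, both points are in $\cB$. Then $s_1(x)=\alpha=s_1(y)$, contradicting $s_1(x)>s_1(y)$ from Proposition~\ref{prop:mon-eig-fun}(i). For (iii), I would run the identical argument for each of $\partial_+\cB_\alpha$ and $\partial_-\cB_\alpha$, this time invoking Proposition~\ref{prop:mon-eig-fun}(ii): under strong monotonicity $x\succ_\cK y$ already implies $s_1(x)>s_1(y)$, so no single isostable sheet can carry an order-related pair.

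Each step is short once Propositions~\ref{prop:boundary} and~\ref{prop:mon-eig-fun} are available, so I do not anticipate a genuine obstacle; the points needing care are bookkeeping ones. First, one must confirm that ``monotone on $\cB$'' really supplies what Proposition~\ref{prop:mon-eig-fun} needs, most importantly that the dominant eigenvalue comes out real so that $s_1$ may be treated as an increasing real observable. Second, the strict statements (ii) and (iii) are to be read sheet-by-sheet, i.e. for each of $\partial_+\cB_\alpha$ and $\partial_-\cB_\alpha$ separately: a point of $\partial_+\cB_\alpha$ and a point of $\partial_-\cB_\alpha$ may well be order-related (this already occurs for stable linear positive systems), so the sheet-wise reading is the one that holds.
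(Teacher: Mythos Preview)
Your proposal is correct and is precisely the argument the paper has set up: the paper does not write out a proof here but defers to \cite{sootla2015koopman}, and the natural route through Propositions~\ref{prop:boundary} and~\ref{prop:mon-eig-fun} that you take is the intended one. Your sheet-by-sheet reading of (ii) and (iii) is also the right one, as the paper itself remarks immediately after the statement that $\partial\cB_\alpha$ can carry order-related pairs provided they lie on opposite sheets $\partial_+\cB_\alpha$ and $\partial_-\cB_\alpha$.
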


The proof of this result is identical to the case of (strongly) eventually monotone systems~\cite{sootla2015koopman}. It is important to note that the boundary $\partial\cB$ can contain two points $x$, $y$ such that $x\gg_\cK y$, but in this case these points belong to different manifolds $\partial_- \cB_\alpha$, $\partial_+ \cB_\alpha$. The point (ii) can also be shown for $\alpha = \infty$, hence the boundaries $\partial_-\cB_\infty$, $\partial_+\cB_\infty$ do not contain $x$, $y$ such that $x\gg_\cK y$ as well. The point (iii) does not generalize in a straightforward manner to the case $\alpha=\infty$. This is however shown in~\cite{smith2001stable,jiang2004saddle} under additional assumptions.

\subsection{Basins of Attraction of Monotone Systems Subject to Parameter Variations \label{s:basin}}
First we need to discuss the following assumptions.
\begin{enumerate}
\item[A1.] The system $\dot x = f(x,p)$ is a bistable system on $\cD$ with two stable equilibria $x^\ast(p)$, $x^{\bullet}(p)$ for all $p$. Assume also that the system is monotone in $x$ and $p$.
\item[A2.] Parameters $p$ take values from a compact set $\cP$ with vectors $p_{\rm max}$, $p_{\rm min}\in \cP$ such that 
\[
p_{\rm min} \preceq p \preceq p_{\rm max} \quad\forall p\in\cP.
\]
\item[A3.] The following holds for all $p \in \cP$: 
\begin{gather*} 
x^\ast(p) \in \bigcap\limits_{q\in\cP} \cB(x^{\ast}(q)),  \quad x^{\bullet}(p) \in \bigcap\limits_{q\in\cP} \cB(x^{\bullet}(q))
\end{gather*}
\item[A4.]  $x^{\bullet}(p_{\rm min}) \gg x^{\ast}(p_{\rm max})$.
\end{enumerate}

Assumption~{A1} defines a monotone bistable system, which is also monotone with respect to parameter variations. In order to simplify the presentation we consider only the standard orders for monotonicity in the initial conditions and parameters (which can be verified by means of Proposition~\ref{prop:kamke}), but our results hold for other orderings as well. In order to obtain any meaningful bounds on the basins of attraction, we need to assume that the set $\cP$ is compact and ordered, which is done in Assumption~{A2}. We also assume that parameter variations are small enough so that no bifurcation occurs under these parameter variations. Moreover, we assume that the parameter variations do not affect the locations of the equilibria in a significant way, which is done in Assumption~{A3}. This assumption and Assumption~{A4} are technical and perhaps can be avoided\footnote{For example, we can modify Assumption~A3 to require the sets $\bigcap\limits_{q\in\cP} \cB(x^{\ast}(q))$, $\bigcap\limits_{q\in\cP} \cB(x^{\bullet}(q))$ to be non-empty, however, this modified assumption is harder to check}, but they are straightforward to satisfy and will simplify the presentation of results. The assumptions on the location of stable equilibria and the basins of attraction are schematically depicted in Figure~\ref{fig:ra-ss}.  Assumptions~{A1, A2, and  A4} are straightforward to check, while Assumption~{A3} can only be verified after basins of attraction for $p_{\rm min}$ and $p_{\rm max}$ are computed. Using these assumptions we can derive the following result. 

\begin{figure}[t]\centering
  \includegraphics[width = 0.65\columnwidth]{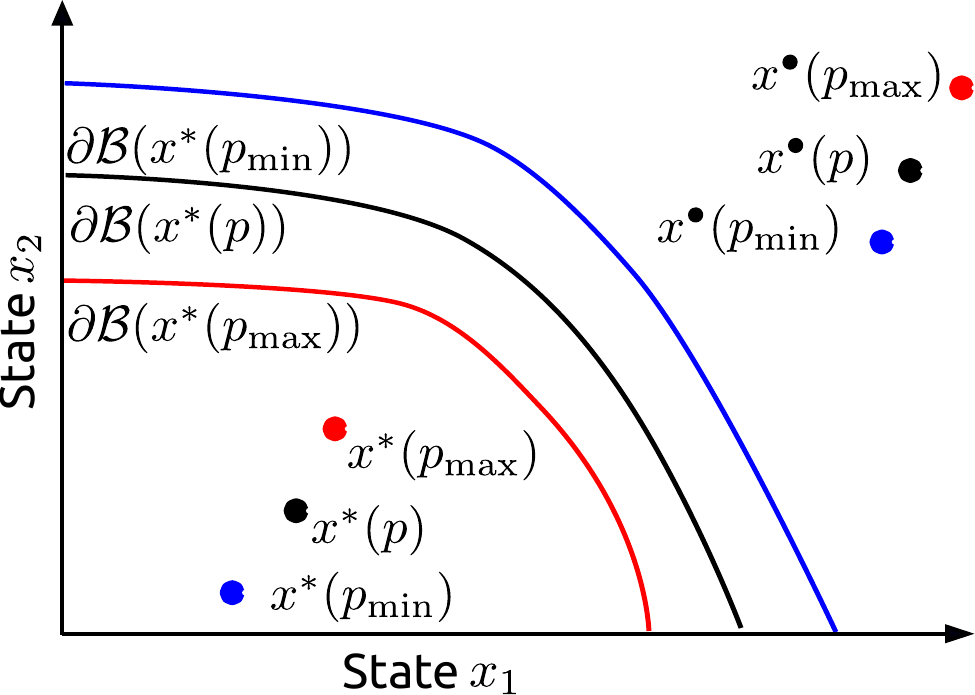}
\caption{A schematic depiction of Assumptions~{A3} and {A4}.} \label{fig:ra-ss}
\end{figure}
\begin{thm} \label{thm:ba}
Consider the system $\dot x = f(x,p)$ under the assumptions {A1 -- A4}. Then $\cB(x^\ast(p_1)) \subseteq \cB(x^\ast(p_2))$ and $\cB(x^{\bullet}(p_1)) \supseteq \cB(x^{\bullet}(p_2))$ for all $p_1 \succeq p_2$. In particular, we have 
\begin{gather}
\begin{gathered}
\cB(x^{\ast}(p_{\rm min})) \subseteq \cB(x^{\ast}(p)) \subseteq \cB(x^{\ast}(p_{\rm max}))~\forall p\in \cP \\
\cB(x^{\bullet}(p_{\rm min})) \supseteq \cB(x^{\bullet}(p)) \supseteq \cB(x^{\bullet}(p_{\rm max}))~\forall p\in \cP
\end{gathered}\label{thm:main-res}
\end{gather}
\end{thm}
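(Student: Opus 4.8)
The plan is to reduce Theorem~\ref{thm:ba} to the \emph{comparison principle in the parameter}: since the system is monotone in $p$ (Assumption A1), Definition~\ref{def:mon} applied with equal initial conditions yields $\phi_f(t,x_0,p_2)\preceq\phi_f(t,x_0,p_1)$ for all $t\ge 0$ and all $x_0\in\cD$, whenever $p_2\preceq p_1$ (with the standard order, as discussed after A1). Everything else is bookkeeping around this inequality together with Assumption A3, the openness/forward-invariance of basins of attraction, and the order-convexity of basins furnished by Proposition~\ref{prop:boundary}.

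First I would prove $\cB(x^\ast(p_1))\subseteq\cB(x^\ast(p_2))$ for $p_1\succeq p_2$. Fix $x_0\in\cB(x^\ast(p_1))$. From $\phi_f(t,x_0,p_1)\to x^\ast(p_1)$ and the comparison principle, $\phi_f(t,x_0,p_2)$ is eventually bounded above by $x^\ast(p_1)+\varepsilon\bfone$ for any prescribed $\varepsilon>0$; by Assumption A3 the point $x^\ast(p_1)$ lies in the basin $\cB(x^\ast(p_2))$, which is open, so for $\varepsilon$ small enough $x^\ast(p_1)+\varepsilon\bfone\in\cB(x^\ast(p_2))$. To pin the trajectory from below I would anchor it at the least element $a$ of the compact set $\cD$ (using forward invariance of $\cD$ for the bistable dynamics, so that $a$ — the ``bottom state'' — belongs to $\cB(x^\ast(p_2))$): then $\phi_f(t,x_0,p_2)\succeq\phi_f(t,a,p_2)\to x^\ast(p_2)$, so the orbit is eventually $\succeq x^\ast(p_2)-\delta\bfone$ as well. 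Hence for $t$ large $\phi_f(t,x_0,p_2)$ lies in the order-interval between $x^\ast(p_2)-\delta\bfone$ and $x^\ast(p_1)+\varepsilon\bfone$ — two points of $\cB(x^\ast(p_2))$ that are comparable precisely because the orbit point lies between them — so by Proposition~\ref{prop:boundary} $\phi_f(t,x_0,p_2)\in\cB(x^\ast(p_2))$. Since a forward orbit that meets a basin converges to the corresponding equilibrium (semigroup property), $x_0\in\cB(x^\ast(p_2))$.

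The inclusion $\cB(x^{\bullet}(p_1))\supseteq\cB(x^{\bullet}(p_2))$ is the mirror image: for $x_0\in\cB(x^{\bullet}(p_2))$ the comparison principle gives $\phi_f(t,x_0,p_1)\succeq\phi_f(t,x_0,p_2)\to x^{\bullet}(p_2)$, so the $p_1$-orbit is eventually $\succeq x^{\bullet}(p_2)-\varepsilon\bfone\in\cB(x^{\bullet}(p_1))$ (Assumption A3 plus openness); anchoring from above at the greatest element of $\cD$ sandwiches the orbit in an order-interval of $\cB(x^{\bullet}(p_1))$, and Proposition~\ref{prop:boundary} together with forward invariance finishes it. Assumption A4 enters here (and in the $x^\ast$ case) in guaranteeing that $x^\ast(p)$ and $x^{\bullet}(p)$ remain genuinely separated across $\cP$, so the two basins really sit on opposite sides of the separatrix. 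Finally, the displayed chains in~\eqref{thm:main-res} are the special cases $(p_1,p_2)=(p,p_{\rm min})$ and $(p_1,p_2)=(p_{\rm max},p)$, legitimate by Assumption A2.

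The step that needs the most care — the main obstacle — is the ``anchoring'': producing a point of $\cB(x^\ast(p_2))$ lying below the whole $p_2$-orbit (and dually above it for $x^{\bullet}$). I expect to dispatch it by exploiting the bistable structure of A1 more explicitly: take $\cD$ forward invariant with a least and a greatest element, and record once and for all that $\cB(x^\ast(p))$ is downward closed and $\cB(x^{\bullet}(p))$ upward closed in $\cD$. This closedness itself follows from Proposition~\ref{prop:boundary} plus the fact that $x^\ast(p)$ and $x^{\bullet}(p)$ are the extreme equilibria (so any intermediate/separatrix equilibrium lies strictly between them, and comparing a downward perturbation against the converging orbit rules out convergence to anything but $x^\ast(p)$). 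With the closedness property in hand, the order-interval argument above collapses to: $\phi_f(t,x_0,p_2)\preceq x^\ast(p_1)+\varepsilon\bfone\in\cB(x^\ast(p_2))$, downward-closedness gives $\phi_f(t,x_0,p_2)\in\cB(x^\ast(p_2))$, and forward invariance pulls $x_0$ into the basin. A secondary point worth stating explicitly is that basins of (hyperbolic) stable equilibria are open and forward invariant, which is what makes ``the orbit eventually enters the basin'' imply ``$x_0$ is in the basin''.
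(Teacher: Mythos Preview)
Your proposal is correct, but it takes a longer route than the paper and the detour is avoidable. The paper's proof dispenses with anchoring and Proposition~\ref{prop:boundary} altogether: it first records that the equilibria are globally ordered across $\cP$, namely $x^{\bullet}(q)\gg x^{\ast}(r)$ for \emph{all} $q,r\in\cP$ (this is obtained by showing $x^{\ast}(\cdot)$ and $x^{\bullet}(\cdot)$ are order-preserving in $p$ via A3 and monotonicity, then invoking A4). With that in hand, for $y\in\cB(x^{\ast}(p_1))$ and $p_1\succeq p_2$ one has $\phi(t,y,p_2)\preceq\phi(t,y,p_1)\to x^{\ast}(p_1)\ll x^{\bullet}(p_2)$, so $\phi(\cdot,y,p_2)$ simply cannot converge to $x^{\bullet}(p_2)$; bistability then forces convergence to $x^{\ast}(p_2)$. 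The $x^{\bullet}$ inclusion is read off from the complement, $\cl(\cB(x^{\bullet}(p)))=\cl(\cD\setminus\cB(x^{\ast}(p)))$.

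Notice that your own fix for the anchoring obstacle --- proving downward closedness of $\cB(x^{\ast}(p))$ by arguing that a trajectory dominated from above by one converging to $x^{\ast}(p)$ cannot reach $x^{\bullet}(p)$ --- is precisely the paper's argument applied with $p_1=p_2=p$. Once you have that lemma, the sandwiching via Proposition~\ref{prop:boundary} and the least/greatest element of $\cD$ becomes redundant: you can apply the same ``cannot converge to the wrong equilibrium'' reasoning directly to the $p_2$-orbit started at $y$. So your approach is sound, but it wraps the decisive step inside an auxiliary claim and then uses extra machinery on top; the paper extracts that step and uses nothing else. Both arguments tacitly rely on the bistability dichotomy (a bounded trajectory that does not converge to $x^{\bullet}$ must converge to $x^{\ast}$), which you flag and the paper leaves implicit.
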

\begin{proof}
i) We need to show that for all $p_1$, $p_2$ we have that $x^{\bullet}(p_1) \gg x^{\ast}(p_2)$. But first we will verify that $x^{\ast}(p_1) \succeq x^{\ast}(p_2)$ for all $p_1 \succeq p_2$. Due to monotonicity for all $p_1\succeq p_2$ we have that 
\[
\phi(t, x^\ast(p_1), p_1) \succeq \phi(t, x^\ast(p_1), p_2),
\]
where $\phi(t, x^\ast(p_1), p_1)$ is equal to $x^\ast(p_1)$ for all $t$, while  $\phi(t, x^\ast(p_1), p_2)$ converges to $x^\ast(p_2)$ since $x^\ast(p_1) \in \cB(x^{\ast}(p_2))$ due to Assumption~{A3}. Hence, $x^\ast(p_1) \succeq x^\ast(p_2)$ for all $p_1$, $p_2\in\cP$ such that $p_1 \succeq p_2$. Similarly we can show that $x^{\bullet}(p_1) \succeq x^{\bullet}(p_2)$ for all $p_1 \succeq p_2$. Together with Assumption~{A4}, this implies that $x^{\bullet}(p_1) \gg x^{\ast}(p_2)$ for all $p_1$, $p_2\in\cP$.

ii) Let $y\in\cB(x^\ast(p_1))$, due to monotonicity we have that 
\[
\phi(t, y, p_1) \succeq \phi(t, y, p_2),
\]
for $p_1 \succeq p_2$, where $\phi(t, y, p_1)$ converges to $x^\ast(p_1)$ with $t\rightarrow \infty$. Now, by point i), we conclude that $\phi(t, y, p_2)$ cannot converge to $x^{\bullet}(p_2)$, hence it converges to $x^\ast(p_2)$. This implies that $\cB(x^\ast(p_1)) \subseteq \cB(x^\ast(p_2))$  for all $p_1 \succeq p_2$.

iii) The inclusion $\cB(x^{\bullet}(p_1)) \supseteq \cB(x^{\bullet}(p_2))$ follows from the fact that $\cl(\cB(x^{\bullet}(p))) =\cl(\cD \backslash \cB(x^\ast(p)))$ for all $p\in\cP$. While~\eqref{thm:main-res} is now straightforward to show.
\end{proof}

Theorem~\ref{thm:ba} implies that we can predict a direction of change in the basins of attraction subject to parameter variations if we check a couple of simple conditions. In many applications (e.g. synthetic biology), so-called toggle switches are used, which are bistable systems. The larger the domain of attraction, the more robust is the toggle switch with respect to the corresponding equilibrium. Therefore this result can be valuable for design purposes in applications, where we need to ensure operational robustness of a stable equilibrium in a toggle switch subject to intrinsic and/or exogenous noise.

\section{Tools for Computing Isostables and Basins of Attraction \label{s:alg}}
In control theory, a go-to approach to compute forward-invariant sets (not only basins of attraction) of dynamical systems is sum-of-squares (SOS) programming. This approach can be applied to systems with polynomial vector fields and also can provide estimates on the basins of attraction for such systems. There exists a number different SOS-based methods for estimation of basins of attraction, see e.g.~\cite{henrion2014convex} and~\cite{valmorbida2014region} and the references within.

Another option is to compute the function $s_1(\cdot)$, which provides the isostables and the basin of attraction. In the case of a polynomial vector field, we can formulate the computation of $s_1(x)$ as an infinite dimensional \emph{linear algebraic} problem using~\eqref{eq:s-one}. Hence, we can provide an approximation of $s_1(x)$  using linear algebra by parametrizing $s_1(x)$ with a finite number of basis functions (cf.~\cite{mauroy2014global}). On another hand, we can estimate $s_1(x)$ \emph{directly from data} using dynamic mode decomposition methods (cf.~\cite{Schmid2010,Tu2014}). These two options provide extremely cheap estimates of $s_1$. In fact, the algebraic methods (as demonstrated in~\cite{mauroy2014global}) also provide good estimates on basins of attraction. However, we cannot typically compute estimates with an excellent approximation quality, which comes as trade-off for fast computations.

If the quality of approximation is very important we can simulate a number of trajectories with initial points on a mesh grid of the state-space and compute $s_1(\cdot)$ in these points using Laplace averages~\eqref{laplace-average}. After that different interpolation or machine learning methods can be applied to estimate the dominant eigenfunction.

We stress that computing the eigenfunction using Laplace averages is not necessarily more computationally difficult than computing the basin of attraction using sum-of-squares. Moreover, the Laplace average method can be applied to systems with non-polynomial vector fields and is well-suited to high dimensions. This is due to the fact that the simulation time of a nonlinear system can significantly be reduced by model reduction (time-scale separation) methods. In contrast to SOS-based methods, however, the Laplace average method does not generally provide a certificate, that is we cannot guarantee that the estimated set is an inner or an outer approximation of a basin of attraction. 
 
In this paper, we use Laplace averages in our computational algorithm. We use an adaptive grid and thus lower computational complexity by exploiting the facts that the eigenfunction is increasing and only one level set is needed. Finally, monotonicity allows us to guarantee that we compute an inner and an outer approximation of the basin, hence providing a certificate. %Due to space limitations, we do not further elaborate on this algorithm and refer the reader to our arxiv e-print~\cite{sootla2016basinsarxiv}. 

\subsection{Algorithm for Computation of a Level Set of an Increasing Function}
First we derive an algorithm for computation of the level sets of an arbitrary increasing with respect to $\cK$ function $y = g(z)$. Our algorithm can be extended to an arbitrary positive cone $\cK$ in $\R^n$, however, in order to simplify the presentation we focus on the ordering induced by a positive orthant. We compute the level sets within a hypercube $\B = \{z | b^0 \preceq z \preceq b^1 \}$. In particular, we compute a set of points $\cM^{\rm min}$, which lies in $\{z |g(z) < \alpha\}$, and a set of points  $\cM^{\rm max}$, which does not lie in $\{z |g(z) < \alpha\}$. The set $\cM^{\rm min}$ (respectively, $\cM^{\rm max}$) will not contain points $x$, $y$ such that $x\gg y$. Since $g(z)$ is an increasing function this will allow to build a piecewise constant inner approximation (respectively, an outer approximation) of the sublevel set. In order to do so we will use the following oracle
\begin{gather}
\cO(z) = \begin{cases}
0  & \textrm{ if }g(z) < \alpha, \\
1  & \textrm{ otherwise}.
\end{cases} 
\end{gather}

We will generate the points on the face of $\B$ with $z_n =b^0_n$ randomly, while choosing the point $z_n$ greedily. In order to do so efficiently, we need to modify a hypercube $\B$ in such a way that the curve $y = g(z)$ does not intersect the faces $z_n = b^0_n$ and $z_n = b^1_n$. Therefore, first we need to increase $b^1_n$ sufficiently such that all vertices of the face with $z_n = b^1_n$ return the value of the oracle $\cO$ equal to one. At the same time, we need to lower the face with $z_n = b^0_n$ significantly so that all the vertices of this face return $\cO$ equal to zero. Due the constraints on the function we can have $b^0 =0$ and $z \succeq 0$, which is typically the case in the state-space of biological applications. In this case, we need to move the points $b^0_1$, $\dots$ $b^0_{n-1}$ and shrink $\B$ in order to achieve the same goal. We can do so, for example, by computing the level set $g(z) = \alpha$ with $z_n = b^0_n$ and a pick a point with the smallest norm. 

Our algorithm exploits the monotonicity property of $g$ in the following manner. If a sample $z^j$ is such that $\cO(z^j) = 0$, then for all $w \preceq z^j$ we have $\cO(w)  = 0$. Similarly, if a sample $z^j$ is such that $\cO(z^j) = 1$, then for all $w \succeq z^j$ we have $\cO(w)  = 1$. Therefore, we need to keep track of the largest (in the order) samples $z^j$ with $\cO(z^j) =0$, the set of which we denote $\cM^{\rm min}$, and the smallest (in the order) samples $z^j$ with $\cO(z^j) = 1$, the set of which we denote $\cM^{\rm max}$. Hence, in order to approximate the function $g(\cdot)$ at every step we generate new samples $z^j$ such that $\cM^{\rm min} \preceq z^j \preceq \cM^{\rm max}$. After generating the samples we compute the oracle values $\cO(z^j)$ and add new samples to the set $\cM^{\rm min}$ if $\cO(z^j)=0$, or to the set $\cM^{\rm max}$ if $\cO(z^j) = 1$. After new samples have been added there might exist $w$, $z$ in $\cM^{\rm min}$ (respectively, $\cM^{\max}$) such that $w\preceq z$ (respectively, $w \succeq z$). In these cases all such samples $w$ must be removed from $\cM^{\rm min}$ (respectively, $\cM^{\max}$). We call this procedure pruning of the sets.

All that is left is to explain the procedure of generation of new samples. First of all, let $\A = \{z | \cM^{\rm min} \preceq z \preceq \cM^{\rm max} \}$ with the Lebesgue measure $|\A|$. The measure of the whole space, which is the defined above hypercube $\B$, is its volume $|\B|$. The value $|\A|$ represents the current error of the algorithm. In order to sample from the actual level set $y = g(z)$ the measure $|\A|$ has to be equal to zero. 

We consider two ways of generating samples: random and greedy. In ``random'' generation we randomly generate points in the set $\A$. Let $z = \begin{pmatrix} z_1 & \dots & z_n \end{pmatrix}$ be a sample, which we need to generate. First, we generate the scalars $z_1$, $\dots$, $z_{n-1}$ on the edge the hypercube $\B$ with $z_n =b^0_n$. We do it by using a probability distribution $\delta_2$ with the support on the whole edge. Then we compute the limits for the generation of $z_n$ such that $\cM^{\rm min}\preceq_\cK z \preceq_\cK \cM^{\rm max}$. Next, we generate $z_n$ using the same distribution $\delta_2$, while adjusting its support. We generate $N_{\rm rn}$ samples in this manner. 

The ``greedy'' generation is meant to exploit more information about $|\A|$ and decrease the value $|\A|$, which represents the error of the algorithm. For every sample $z^i$ in $\cM^{\rm min} \bigcup \cM^{\rm max}$ we find points $w^j$ from $\cM^{\rm min} \bigcup \cM^{\rm max}$ such that $w^j \succeq z^i$ and such that there is no other $\xi$ satisfying $w^j\succeq \xi \succeq z^i$. Then we form hypercubes $\D_{j i}$ with vertices $w^j$ and $z^i$ and choose $\D_{j i}$ with the largest volume. We denote these hypercubes $\D_i$. Having done it for every sample $z^i$ in $\cM^{\rm min}\bigcup \cM^{\rm max}$ we compute $N_{\rm gr}$ hypercubes $\D_{i}$ with largest volumes. After that we generate $N_{\rm gr}$ samples in each of the hypercubes $\D_{i}$ using a probability distribution $\delta_1$. If no information about the system is known then we choose $\delta_1$, $\delta_2$ as uniform distributions. Otherwise, we can choose a Beta distribution so that the median approximates the most probable location of the curve. 

The procedure is summarized in Algorithm~\ref{alg:s1}. Note that our sample generation guarantees that the measure $|\A|$ is nonincreasing with generation of new samples, while random sampling guarantees that the value $|\A|$ eventually converges to zero with the rate of convergence depending on specific functions $g$. However, in our examples we observed exponential empirical convergence, which can be explained by the fact that our algorithm can be seen as a bisection procedure using random sampling.

\begin{algorithm}[t]
\caption{Computation of a level set of incomparable points of a function $\alpha = g(x)$}
\label{alg:s1}
\begin{algorithmic}[1]
\State {\bf Inputs:} Oracle $\cO$, the number of greedy samples $N_{\rm gr}$, the number of random samples $N_{\rm rn}$, the total number of samples $N_{\rm tot}$, the boundary points $b^0$, $b^1$
\State {\bf Outputs:} The sets of points $\cM^{\rm min}$, $\cM^{\rm max}$.
     \State Set $N = N_{\rm gr}+N_{\rm rn}$
     \State Adjust the boundary points $b^0$ and $b^1$
     \State Set $\cM^{\rm max}=\{c^i\}$, $\cM^{\rm min} = \{d^i\}$
     \For{$i= 1,\dots, [N_{\rm tot}/N]$}
	 \State Compute the hypercubes $\D_{i}$, where $i = 1, \dots, N_{\rm gr}$ 
	 \State Generate $N_{\rm gr}$ samples $z^j$ in the hypercubes $\D_{i}$ using a probability distribution $\delta_1$.
	 \State Generate $N_{\rm rn}$ samples $z^j$ in the admissible set $\A$ using a probability distribution $\delta_2$. 
     \For{$j = 1,\dots, N$}
			\State If $\cO(z^j) = 0$ add the sample $z^j$ to the set $\cM^{\rm min}$, otherwise add $z^j$ to the set $\cM^{\rm max}$.
     \EndFor
     \State Prune the sets $\cM^{\rm min}$, $\cM^{\rm max}$ for comparable samples $z^j$ and update the admissible set $\A$.
     \EndFor
\end{algorithmic}
\end{algorithm}

\subsubsection{Oracles for Computation of Isostables and Basins of Attraction}
 Without loss of generality we assume that the stable equilibrium lies at the origin. Since the sets $\partial \cB_\alpha(0)$ and $\partial \cB(0)$ are level sets of some increasing function $y = g(x)$ according to Proposition~\ref{prop:level-set-proper}, we can use Algorithm~\ref{alg:s1} to compute them. In order to do so, we can use the following oracle:
\begin{gather}
\cO(x) = \begin{cases}
0  & \textrm{ if }s_1(x) < \alpha \textrm{ and } \|\phi(T,x) - x^\ast\| < \varepsilon, \\
1  & \textrm{ otherwise},
\end{cases} 
\end{gather}
where the value of the eigenfunction $s_1(x)$ is computed using Laplace averages~\eqref{laplace-average}, and $T$ is a large enough time. We need the second condition $\|\phi(T,x) - x^\ast\| < \varepsilon$ to make sure that the points also lie in $\cB(0)$. Even though it is theoretically unlikely to have a point $x$ with a finite (nonzero) $s_1(x)$ not lying in $\cB(0)$, such situations can occur numerically. If we need to compute $\cB(0)$, then we can drop the first condition $s_1(x) < \alpha$, since in this case $\alpha = \infty$. 

We can also apply this algorithm to the computation of the so-called switching separatrix defined in the context of pulse-controlled bistable systems~\cite{sootla2015pulsesaut}, since it is also the level set of an increasing function.

\section{Numerical Examples \label{s:examples}}
\subsection{ A Two-State LacI-TetR Switch} 
\begin{figure}[t]\centering
  \includegraphics[width = 0.85\columnwidth]{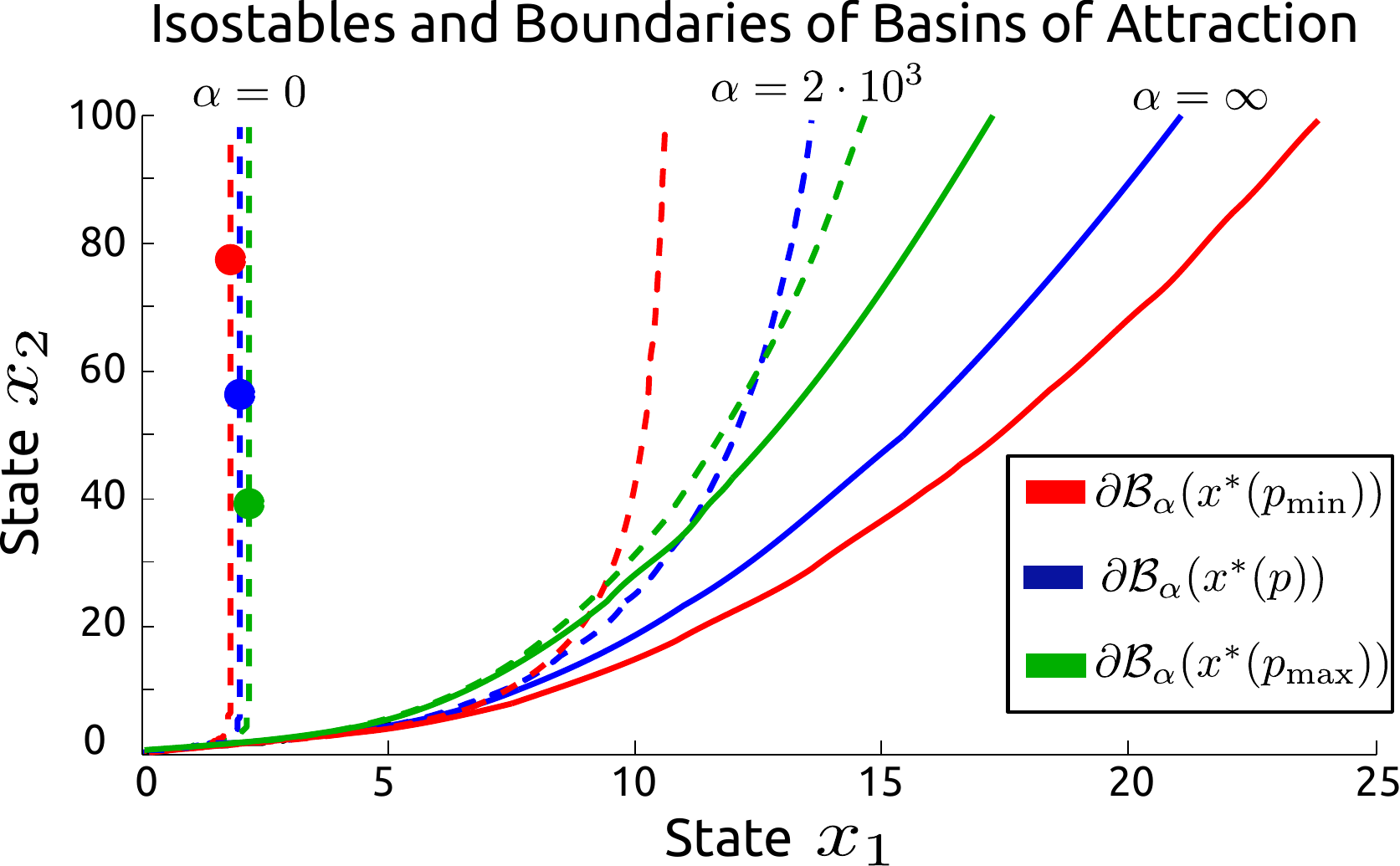}
\caption{Isostables $\partial \cB_{0}(x^\ast(q))$, $\partial \cB_{2\cdot 10^3}(x^\ast(q))$, and boundaries of basins of attraction $\partial \cB(x^\ast(q))$ for $q$ equal to $p$, $p_{\rm min}$, or $p_{\rm max}$. The dots represent the equilibria $x^\ast(p)$ (blue), $x^\ast(p_{\rm min})$ (red), $x^\ast(p_{\rm max})$ (green) for different parameter values} \label{fig:toggle-2d-iso}
\end{figure}
The genetic toggle switch is composed of two mutually repressive genes \emph{LacI} and \emph{TetR} and was a pioneering genetic system for synthetic biology~\cite{Gardner00}. We consider the following model of the toggle switch
\begin{gather}\label{sys:toggle-2d}
\begin{aligned}
\dot x_1 &= p_{1 1} + \frac{p_{1 2}}{ 1 + x_2^{p_{1 3}}} - p_{1 4} x_1, \\
\dot x_2 &= p_{2 1} + \frac{p_{2 2}}{ 1 + x_1^{p_{2 3}}} - p_{2 4} x_2,
\end{aligned}
\end{gather}
where all $p_{i j} \ge 0$. The states $x_{i}$ represent the concentration of proteins LacI and TetR, whose mutual repression is modeled via a rational function. The parameters $p_{1 1}$ and $p_{2 1}$ model the basal synthesis rate of each protein. The parameters $p_{1 4}$ and $p_{2 4}$ are degradation rate constants, and $p_{1 2}$, $p_{2 2}$ describe the strength of mutual repression. The parameters $p_{1 3}$, $p_{2 3}$ are called Hill coefficients. By means of Proposition~\ref{prop:kamke} we can readily check that the model is monotone on $\Rnn^2$ for all nonnegative parameter values with respect to the orthant $\diag{\begin{pmatrix} 1 & -1 \end{pmatrix}} \Rnn^2$. Moreover, the model is monotone with respect to all parameters but $p_{1 3}$, $p_{2 3}$. The stable equilibrium $x^\ast$ has the state $x_2$ ``switched on'' (i.e., $x_2$ is much larger than $x_1$), while $x^\bullet$ has the state $x_1$ ``switched on'' (i.e., $x_1$ is much larger than $x_2$).

In order to evaluate Theorem~\ref{thm:ba} we consider the set of admissible parameters $\cP = \{q | p_{\rm max} \succeq_p q \succeq_p p_{\rm min}\}$, where
\begin{gather*}
 p_{\rm min} = \begin{pmatrix}
 1.8 & 950 & 4 & 1 \\ 1.2 & 1050 & 3 & 2
 \end{pmatrix} \,\,
 p_{\rm max} = \begin{pmatrix}
 2.2 & 1100 & 4 & 1 \\ 0.7 & 900 & 3 & 2 
 \end{pmatrix}.  
\end{gather*}
We compute the isostables $\partial \cB_\alpha(x^\ast(\cdot))$ with $\alpha = 0$, $2\cdot 10^3$, $\infty$ for systems with parameters $p$, $p_{\rm min}$, $p_{\rm max}$, where $\partial \cB_\infty(x^\ast(\cdot))$ is the boundary of the basin of attraction $\cB(x^\ast(\cdot))$, and the matrix of parameters $p$ is equal to:
\begin{gather*}
 p = \begin{pmatrix}
 2 & 1000 & 4 & 1 \\ 1 & 1000 & 3 & 2 
 \end{pmatrix}.
\end{gather*}
 The computational results are depicted in Figure~\ref{fig:toggle-2d-iso}. These results suggest that for all parameter values $p\in\cP = \{q | p_{\rm max} \succeq_{\cK_p} q \succeq_{\cK_p} p_{\rm min} \}$ the manifold $\partial \cB(x^\ast(p))$ will lie between the manifolds $\partial \cB(x^\ast(p_{\rm min}))$ and $\partial \cB(x^\ast(p_{\rm max}))$. It appears that $\partial \cB_0(x^\ast(p))$ also lies between the manifolds $\partial \cB_0(x^\ast(p_{\rm min}))$ and $\partial \cB_0(x^\ast(p_{\rm max}))$, however, in a different order. This change of order and continuity of $s_1$ implies that there exists an $\alpha$ such that at least two manifolds $\partial \cB_\alpha(x^\ast(p))$, $\partial \cB_\alpha(x^\ast(p_{\rm max}))$, $\partial \cB_\alpha(x^\ast(p_{\rm min}))$ intersect. This case is also depicted with $\alpha = 2\cdot 10^3$. This observation implies that $s_1(x,p)$ is not an increasing function in $p$. This is consistent with the linear case, where changes in the drift matrix $A$, will simply rotate the hyperplane $\left\{ x \bigl| w_1^T x = 0 \right\}$, where $w_1$ is the left dominant eigenvector of $A$.
\begin{figure}[t]\centering
  \includegraphics[width = 0.85\columnwidth]{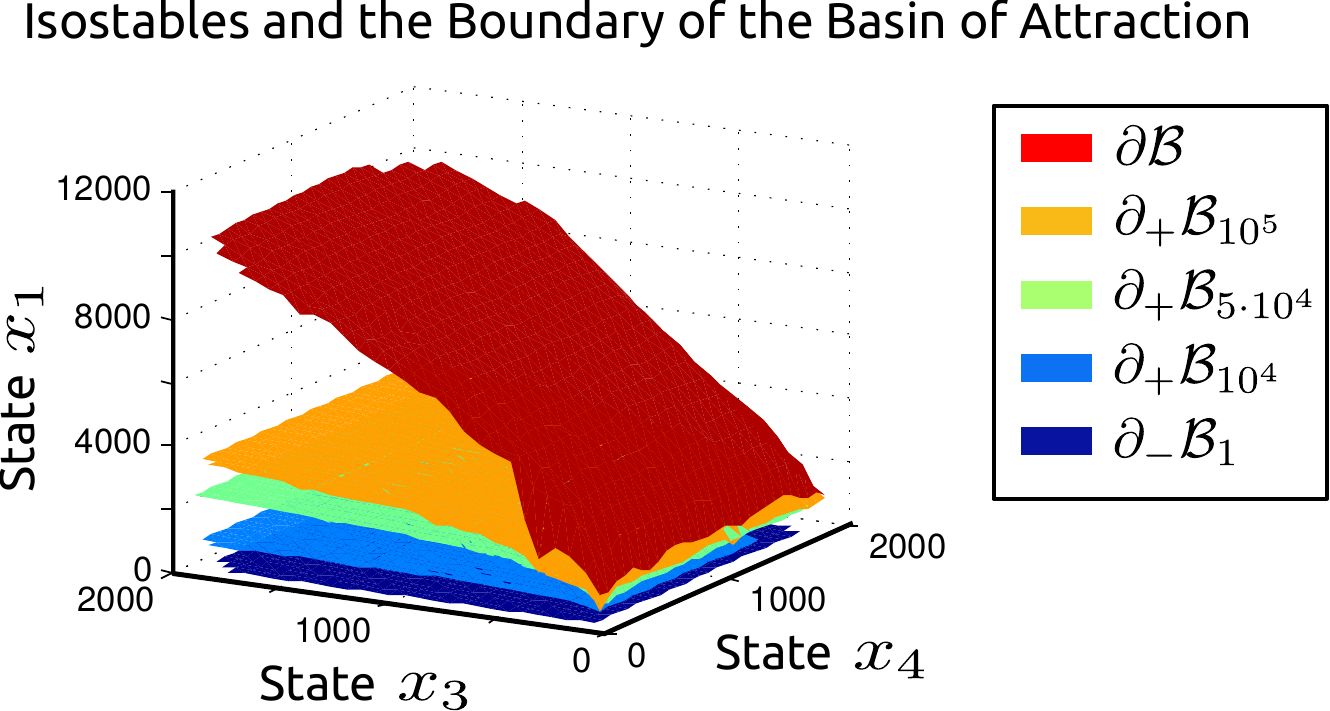}
\caption{Isostables and the basin of attraction of a four state toggle switch. We depict cross section of isostables $\partial \cB_{10^4}$, $\partial \cB_{2 \cdot 10^4}$ and $\partial \cB_{10^5}$, and the boundary of the basin of attraction $\partial \cB$ for a four state system with $x_2 = 10^{-7}$.} \label{fig:toggle-3d-iso}
\end{figure}
 
\subsection{A Four State Toggle Switch.} 
We can compute the basins of attraction and isostables for $n$ dimensional models, as well. Naturally, we can visualize only $3$-D cross-sections of basins of attraction of $n$ dimensional systems. We consider again a toggle switch, while modeling also mRNA concentrations activating their corresponding proteins. The resulting model is as follows
\begin{gather}\label{sys:toggle-3d}
\begin{aligned}
\dot x_1 &=  p_{1 1} x_2^{p_{1 2}} - p_{1 3} x_1, \quad \dot x_2 &= \frac{p_{2 1}}{ 1 + x_3^{p_{2 2}}} - p_{2 3} x_2, \\  
\dot x_3 &= p_{3 1} x_4^{p_{3 2}} - p_{3 3} x_3,  \quad \dot x_4 &=  \frac{p_{4 1}}{ 1 + x_1^{p_{4 2}}} - p_{4 3} x_4, 
\end{aligned}
\end{gather}
 where the parameters have the following values 
\begin{gather*}
p_{1 1} = 1000,~ p_{1 2}= 1,~p_{1 3} = 1,~p_{2 1} = 2,~ p_{2 2}= 3,~p_{2 3} = 1\\
p_{3 1} = 1000,~ p_{3 2}= 1,~p_{3 3} = 2,~p_{4 1} = 1,~ p_{4 2}= 3,~p_{4 3} = 2,
 \end{gather*}

Again, by means of Proposition~\ref{prop:kamke}, we can check that the model is monotone on $\Rnn^4$ for all nonnegative parameter values with respect to the orthant $\diag{\begin{pmatrix} 1 & 1 & -1 & -1\end{pmatrix}} \Rnn^4$. We will plot a cross-section with $x_2 = 10^{-7}$. In Figure~\ref{fig:toggle-3d-iso}, we plot isostables $\partial_- \cB_{1}$, $\partial_+ \cB_{10^4}$, $\partial_+ \cB_{5 \cdot 10^4}$ and $\partial_+ \cB_{10^5}$. We also plot the boundary of the domain of attraction. 

It is noticeable that the sets $\partial\cB_\alpha$ do not depend on the state $x_4$ in a significant manner. Hence the state $x_4$ can be reduced using time-scale separation methods. Similarly, we can reduce $x_2$. This is in an agreement with ``a rule of thumb'' in protein-mRNA interactions, which states that mRNA dynamics are much faster than protein dynamics and hence can be reduced. Moreover, for every fixed $x_2$, $x_4$ the shapes of isostables are similar to the shape of the isostables of a two state toggle switch. 
 
\section{Conclusion}
In this paper, we study geometric properties of monotone systems such as properties of isostables and basins of attraction. Isostables contain the initial states of trajectories that converge synchronously toward the equilibrium and can be computed using the Koopman operator framework. Isostables are boundaries of forward-invariant sets of dynamical systems, and hence serve as convenient refinement of the basin of attraction notion. We show that the isostables and the boundary of basin of attraction of monotone systems share the same geometric properties. For example, these manifolds do not contain strictly comparable points in the order. Our derivations are based on a positivity result for the Koopman semigroup, which is related to the statement of the Perron-Frobenius theorem. We then focus on properties of basins of attraction of bistable systems. We show how to estimate basins of attraction of monotone parameter-dependent systems subject to parametric uncertainty, and illustrate our results on numerical examples. Future work includes more practical implications of our results in synthetic biology, and a further study of the properties of our algorithm such as rate of convergence, efficiency and scalability.

%\bibliography{../../bibl_koopman}
\bibliography{ba_acc_arxiv.bbl}
\end{document}